\theoremstyle{plain}
\newtheorem{theorem}{Theorem}[section]
\newtheorem*{theorem*}{Theorem}
\newtheorem{lemma}[theorem]{Lemma}
\newtheorem{proposition}[theorem]{Proposition}
\theoremstyle{definition}
\newtheorem{definition}[theorem]{Definition}
\newtheorem{remark}{Remark}
\newcommand{\A}{x}
\newcommand{\B}{y}
\newcommand{\C}{\mathcal{C}}
\newcommand{\Z}{\mathbb{Z}}
\newcommand{\R}{\mathbb{R}}
\newcommand{\g}{\gamma}
\newcommand{\e}{\epsilon}
\renewcommand{\H}{{\mathbb{H}}}
\renewcommand{\t}{\tau}
\newcommand{\bx}{\langle x \rangle}
\newcommand{\by}{\langle y \rangle}
\newcommand{\la}{\langle}
\newcommand{\ra}{\rangle}
\newcommand{\T}{\mathcal{T}}
\theoremstyle{plain} 
\newcommand{\thistheoremname}{}
\newtheorem*{genericthm*}{\thistheoremname}
\newenvironment{style}[1]
  {\renewcommand{\thistheoremname}{#1}%
   \begin{genericthm*}}
  {\end{genericthm*}}
\begin{document}
\title[Equal angle theorem]{Equal angles of intersecting geodesics for every hyperbolic metric}

\author{Arpan Kabiraj}

\address{Department of Mathematics\\ 
		Chennai Mathematical Institute\\
		Chennai 603103, India}

\email{akabiraj@cmi.ac.in\\arpan.into@gmail.com}

\begin{abstract}

We study the geometric properties of the terms of the Goldman bracket between two free homotopy classes of oriented closed curves in a hyperbolic surface. We provide an obstruction for the equality  of two terms in the Goldman bracket, namely  if two terms in the Goldman bracket are equal to each other then for every hyperbolic metric, the angles corresponding to the intersection points are equal to each other. As a consequence, we obtain an alternative proof of a theorem of Chas, i.e. if one of the free homotopy classes contains a simple representative then the geometric intersection number and the number of terms (counted with multiplicity) in the Goldman bracket are the same. 

\end{abstract}
\maketitle
\section{Introduction} Let $F$ be an oriented surface (possibly with boundary). We denote the free homotopy class of an oriented closed curve $x$ in $F$ by $\la x\ra$. Let $\la x\ra$ and $\la y\ra$ be two free homotopy classes of oriented closed curves. Let $x$ and $y$ be two representatives from $\la x\ra$ and $\la y\ra$ respectively, such that they intersect transversally in double points. The Goldman bracket between $\la x\ra$ and $\la y\ra$ is defined as 
$$[\la x\ra,\la y\ra]=\sum_{p\in x\cap y}\e(p)\la x*_p y\ra$$ 
where $x*_py$ denotes the loop product of $x$ and $y$ based at $p$, $x\cap y$ denotes the set of all intersection points between them,  $\e(p)$ denotes the sign of the intersection between $x$ and $y$ at $p$ (in the positive direction).  Abusing notation, we sometimes denote $[\la x\ra,\la y\ra]$ simply by $[x,y]$.

We denote the collection of all free homotopy classes of oriented closed curves on $F$ by $\C$ and  the free module generated by $\C$ by $\Z(\C)$.
We extend the Goldman bracket linearly on $\Z(\C)$. 

In \cite{Gol}, Goldman proved that this bracket is a well defined Lie bracket on $\Z(\C)$. The pair $(\Z(\C),[,])$, is called the \emph{Goldman Lie algebra}.  

\begin{remark}Since the set of free homotopy classes of the sphere is trivial, the corresponding Goldman Lie algebra is trivial. For the torus, the Goldman Lie algebra is well understood (see \cite[Lemma 7.6]{Ch1}). Therefore throughout the paper we only consider surfaces of negative Euler characteristic.  
\end{remark}


Let $\bx$ and $\by$ be two elements in $\C$ and $i(x,y)$ be the \textit{ geometric intersection number} (see Definition \ref{gin}) between $\bx$ and $\by$. From the definition it follows that if $i(x,y)=0$ then $[\bx,\by]=0$. Goldman \cite[Section 5.17]{Gol}, proved that if $\bx$ contains a representative which is a \emph{simple closed curve} (a closed curve without self-intersections) then the converse is also true. He  used convexity properties of length functions on  Teichm{\"u}ller space to prove the following theorem.   
\begin{theorem}\label{Gol}{\em{(Goldman)}}
Let $\bx$ and $\by$  be two free homotopy classes of closed oriented curves in $F$. If $\bx$ contains a simple representative and $[\bx,\by]=0$ then $i(x,y)=0$. 
\end{theorem}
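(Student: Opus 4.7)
My plan is to argue by contradiction using hyperbolic length functions on Teichm\"uller space. Suppose $i(x,y) > 0$ and $[\langle x\rangle,\langle y\rangle] = 0$. Fix a complete hyperbolic metric $\sigma_0$ on $F$ and take the geodesic representatives of the two classes. Because $\langle x\rangle$ admits a simple representative, its geodesic representative is itself a simple closed geodesic, which I continue to call $x$; it meets the geodesic $y$ transversally in exactly $i(x,y)\geq 1$ points. Since the bracket
$$[\langle x\rangle,\langle y\rangle] \;=\; \sum_{p\in x\cap y}\epsilon(p)\,\langle x*_p y\rangle$$
vanishes in the \emph{free} $\mathbb{Z}$-module $\mathbb{Z}(\mathcal{C})$, the intersection points must partition into pairs $\{p,q\}$ with $\epsilon(p) = -\epsilon(q)$ and $\langle x*_p y\rangle = \langle x*_q y\rangle$. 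The strategy is to show that this pairing is incompatible with the simplicity of $x$.

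Equality of free homotopy classes gives equality of their geodesic length functions throughout $\mathcal{T}(F)$, so
$$\ell_\sigma(x*_p y) \;=\; \ell_\sigma(x*_q y) \qquad \text{for every } \sigma\in \mathcal{T}(F).$$
Since $x$ is a simple closed geodesic, I can deform $\sigma_0$ along the Fenchel--Nielsen earthquake flow $\sigma_t$ about $x$. Wolpert's derivative formula then reads
$$\left.\frac{d}{dt}\right|_{t=0}\ell_{\sigma_t}(\gamma) \;=\; \sum_{r\in \gamma\cap x}\cos\theta_r(\gamma),$$
for every closed geodesic $\gamma$, where $\theta_r(\gamma)$ is the angle of intersection at $r$. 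Applying this to $\gamma = x*_p y$ and $\gamma = x*_q y$ and using that the lengths agree along the entire earthquake path forces an equality of the corresponding angle sums; iterating with twists along further simple curves in a pants decomposition containing $x$ should pin down the individual angles at $p$ and $q$, yielding precisely the equal-angle conclusion announced in the abstract. One then deforms the metric so as to change, say, the angle at $p$ without matching it at $q$, producing a contradiction.

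The main obstacle I anticipate is the geometric bookkeeping in Wolpert's formula: the intersections of the composite geodesic $x*_p y$ with $x$ are not literally the points of $x\cap y$, but rather the intersections of a closed geodesic that \emph{tracks} $x$ once and $y$ once. To turn the length equalities into information about the original angles $\theta_p$, one needs a controlled piecewise-geodesic approximation of $x*_p y$ by the concatenation $x\cdot y$ at $p$, or equivalently a direct trace-identity computation in $\mathrm{PSL}(2,\mathbb{R})$ relating $\ell_\sigma(x*_p y)$ to $\ell_\sigma(x)$, $\ell_\sigma(y)$, and $\theta_p$ via a single hyperbolic trigonometric identity. Once such an identity is in hand, the equation $\ell_\sigma(x*_p y) = \ell_\sigma(x*_q y)$ becomes a constraint on $(\theta_p,\theta_q)$ that cannot be maintained as $\sigma$ varies over $\mathcal{T}(F)$, completing the proof.
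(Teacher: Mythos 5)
Your skeleton is the right one and matches the paper's strategy in outline: reduce cancellation to pairs $p,q$ with $\e(p)=-\e(q)$ and $\la x*_py\ra=\la x*_qy\ra$, note that equal classes have equal geodesic lengths at every point of $\T(F)$, convert that length equality into an angle equality, and then break it with a Fenchel--Nielsen twist along the simple curve $x$. But the two load-bearing steps are missing. First, the conversion from lengths to angles: you correctly flag that you need an identity relating $\ell_\sigma(x*_py)$ to $\ell_\sigma(x)$, $\ell_\sigma(y)$ and $\theta_p$, but you do not supply it, and your proposed substitute via Wolpert's derivative formula is vacuous here --- if $\la x*_py\ra=\la x*_qy\ra$ then the two classes have the \emph{same} geodesic representative, so comparing $\frac{d}{dt}\ell_{\sigma_t}(x*_py)$ with $\frac{d}{dt}\ell_{\sigma_t}(x*_qy)$ yields a tautology and says nothing about $\theta_p$ versus $\theta_q$. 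The identity you want is elementary and is the heart of the paper's Theorem \ref{stng}: lifting to $\H$, the axis of $xy^g$ cuts off a hyperbolic triangle with sides $\t_x/2$ and $\t_y/2$ meeting at angle $\pi-\theta_p$ and opposite side $\ell(x*_py)/2$ (Proposition \ref{axlemma}), so the hyperbolic cosine rule determines $\theta_p$ from the three lengths; since the same three lengths occur at $q$, one gets $\theta_p(X)=\theta_q(X)$ for every $X\in\T(F)$.

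Second, the contradiction itself is only asserted ("deform the metric so as to change the angle at $p$ without matching it at $q$"), and the "iterate over a pants decomposition" idea is not an argument. The mechanism the paper uses is Kerckhoff's monotonicity (Lemma \ref{ker}): under the left twist along the simple geodesic $x$, the unoriented angle $\phi_r(X_s)$ at \emph{every} intersection point is strictly decreasing in $s$. The hypothesis $\e(p)=-\e(q)$ is then essential: it forces $\theta_p=\pi-\phi_p$ at one point and $\theta_q=\phi_q$ at the other, so $\theta_p(X_s)$ strictly increases while $\theta_q(X_s)$ strictly decreases, contradicting $\theta_p(X_s)=\theta_q(X_s)$ for all $s$. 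Your write-up never uses the opposite signs of $\e(p)$ and $\e(q)$, and without them there is no reason a twist should separate the two angles. To complete your proof you need to (i) prove the cosine-rule identity via the lift picture, and (ii) invoke or prove the strict angle monotonicity under twisting and combine it with the sign condition.
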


Goldman \cite[5.17, Remark]{Gol} asked whether Theorem \ref{Gol} (which is a topological statement) has a topological proof. In \cite{Ch1}, using HNN extensions and amalgamated products of the fundamental group of a surface,  Chas gave a topological proof of the following generalization of Theorem \ref{Gol}.

\begin{theorem}{\em{(Chas)}}\label{mt}
Let $\bx$ and $\by$ be elements in $\C$. If $x$ contains a simple representative then $i(x,y)$ is same as the number of terms in $[\bx,\by]$ counted with multiplicity. 
\end{theorem}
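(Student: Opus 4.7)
The plan is to deduce the theorem as a consequence of the paper's main equal angle result. Fix any hyperbolic structure on $F$ and replace $\la x\ra$, $\la y\ra$ by their unique closed geodesic representatives, still called $x$ and $y$. A standard fact of hyperbolic surface geometry is that closed geodesics minimize the number of intersections within their free homotopy classes, so $|x \cap y| = i(x,y)$, and
\[
 [\la x\ra, \la y\ra] \;=\; \sum_{p \in x \cap y} \e(p)\, \la x *_p y\ra
\]
is a signed sum with exactly $i(x,y)$ terms.

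Expanding the bracket as $\sum_{\t \in \C} c_\t\, \t$, let $N := \sum_\t |c_\t|$ be the number of terms counted with multiplicity. By the triangle inequality $N \le i(x,y)$, with equality if and only if there is no cancellation in the signed sum: whenever two distinct intersection points $p \neq q$ satisfy $\la x *_p y\ra = \la x *_q y\ra$, the signs $\e(p)$ and $\e(q)$ must agree. The theorem therefore reduces to showing that, under the simplicity hypothesis on $x$, any two intersection points producing the same free homotopy class produce the same intersection sign.

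This sign equality is exactly where the main equal angle result comes in. If $\la x *_p y\ra = \la x *_q y\ra$, the main theorem tells us that the intersection angles at $p$ and at $q$ coincide for every hyperbolic metric. Since a transverse intersection of oriented geodesics on an oriented surface has its sign determined by the oriented angle between the tangents --- the sign is $+1$ when the oriented angle from $\dot x$ to $\dot y$ lies in $(0,\pi)$ and $-1$ when it lies in $(\pi, 2\pi)$ --- equal oriented angles at $p$ and $q$ force $\e(p) = \e(q)$. Hence $N = i(x, y)$, as desired.

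The principal obstacle is the main equal angle theorem itself, which is the core geometric content of the paper. Given it, the deduction above is essentially formal: it needs only that geodesic representatives realize the geometric intersection number and the local angle--sign correspondence at a transverse crossing. The simplicity of $x$ enters in invoking the main theorem in the precise form required here; without simplicity, Chas's statement genuinely fails because of bona fide cancellations in the bracket.
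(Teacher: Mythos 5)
Your reduction is set up correctly: taking geodesic representatives realizes $i(x,y)$, and the theorem reduces to showing that $\la x*_py\ra=\la x*_qy\ra$ forces $\e(p)=\e(q)$. This is exactly the lemma the paper proves in Section \ref{chas}. But your final step --- deducing $\e(p)=\e(q)$ directly from the equal angle theorem --- has a genuine gap. The angle $\theta_p$ controlled by Theorem \ref{stng} is the \emph{unsigned} angle lying between the positive directions of $x$ and $y$, a number in $(0,\pi)$; it is obtained from the hyperbolic cosine rule applied to a triangle with prescribed side lengths, and such a computation can never see the orientation class of the frame $(\dot x,\dot y)$. Two intersection points can have identical $\theta$ and opposite signs: the paper records precisely this in the remark that $\theta_p$ and $\phi_p$ (the anticlockwise angle from $y$ to $x$) are ``either congruent or supplementary,'' with which alternative holds depending on $\e(p)$. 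So equality of the angles of Theorem \ref{stng} does not force equality of signs, and your deduction is not ``essentially formal.'' A quick sanity check confirms this: Theorem \ref{stng} holds with no simplicity hypothesis on $x$ (Remark \ref{rmkstng}), yet Chas's theorem is false for non-simple $x$ because genuine cancellations occur; a formal deduction from Theorem \ref{stng} alone would therefore prove a false statement. In your write-up simplicity never actually does any work, which is the symptom of the missing ingredient.

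The missing ingredient is Lemma \ref{ker} (Kerckhoff): the Fenchel--Nielsen left twist along the simple geodesic $x$ makes the anticlockwise angle $\phi_p(X_s)$ \emph{strictly decreasing} in $s$ at every intersection point. The paper's argument runs: if $\e(p)=-\e(q)$, then $\theta_p=\pi-\phi_p$ at one point and $\theta_q=\phi_q$ at the other, so under the twist $\theta_p(X_s)$ strictly increases while $\theta_q(X_s)$ strictly decreases; since Theorem \ref{stng} forces $\theta_p(X_s)=\theta_q(X_s)$ for every $s$, this is a contradiction. This is where both the simplicity of $x$ (needed to perform the twist) and the full strength of ``equal angles for \emph{every} metric'' (not just one) are used. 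To repair your proof you would need to insert exactly this twist-deformation argument; the equal angle theorem by itself only constrains the unsigned geometry at $p$ and $q$ and cannot rule out a cancelling pair.
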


In this paper, we use tools from hyperbolic geometry to study the terms of the  Goldman bracket between \emph{any} two free homotopy classes of oriented closed curves. In Theorem \ref{stng}, we obtain an obstruction for the equality of two terms of the Goldman bracket. 

\begin{style}{Theorem \ref{stng}}
 If $x$ and $y$ are any two oriented closed geodesics with intersection points $p$ and $q$ such that the terms of the Goldman bracket have 
 the same associated free homotopy classes (i.e., $\la x*_py\ra=\la x*_qy\ra$)  then  the angles between $x$ and $y$ at $p$ and $q$ (defined appropriately) are equal to each other for every hyperbolic metric in $F$.
\end{style} 

The proof of Theorem \ref{stng} is quite elementary. We then use the following non-trivial observation by Kerckhoff (see Lemma \ref{ker}): \emph{If two geodesics intersect at a point $p$ and one of them is simple, then the twist deformation with respect to the simple geodesic changes the angle of intersection at $p$ strictly monotonically}. Combining Theorem \ref{stng} and Lemma \ref{ker}, we  obtain an alternative proof of Theorem \ref{mt}. 

The hyperbolic geometry techniques used in our work are motivated by the topological operations used by Chas in \cite{Ch1} in the following sense: In \cite{Ch1}, using HNN extension (for non-separating curves) and free product with amalgamation (for separating curves), the author wrote the terms of the Goldman bracket as a composition of two types of terms: (a) elements of the fundamental group of the components of $F\setminus \{\text{the simple closed curve}\}$ and (b) elements of the cyclic group generated by the simple closed curve. Then the author used combinatorial group theory to distinguish the conjugacy classes. Now to obtain a geometrical proof of non-conjugacy, it is natural to consider the geodesic representatives of the corresponding terms and study the angles between these geodesic arcs. That is the proof of Theorem \ref{stng}.    

Goldman discovered this bracket while studying the Weil-Petersson symplectic form on Teichm{\"u}ller space. He showed that given two free homotopy classes of closed curves, the Poisson bracket of the corresponding length functions on  the Teichm{\"u}ller space can  be expressed in terms of the lengths of the terms of the Goldman bracket between them (see \cite[Theorem 3.15]{Gol}). 

The use of the work of Kerchkoff and Wolpert on angles (instead of convexity of the length function) to study non-cancellation of Poisson bracket on  Teichm{\"u}ller space was already known (see \cite[page 226]{W}). The novelty in our approach is to present the required results using basics of hyperbolic geometry and to obtain a proof of Theorem \ref{mt} (rather than Theorem \ref{Gol}) in a self-contained manner.  
  
 Later relation between number of terms of the Goldman bracket and geometric intersection number has been studied using tools from both combinatorial group theory (see \cite{Ch},\cite{Ch1}, \cite{CK}) and hyperbolic geometry (see \cite{GC}, \cite{K1}). For a survey about these results see \cite{Chas}. 

\textbf{Idea of the proofs:} We use techniques from hyperbolic geometry to prove our results. Let $X$ be any point in the Teichm{\"u}ller space $\T(F)$ of $F$ (see Section \ref{Prel}). Let $x_1$  and $y_1$ be two oriented closed curves. The free homotopy class of $x_1$ (respectively $y_1$) contains a unique geodesic, which we denote by $x$ (respectively $y$).  

Given any two terms of $[x,y]$,  we consider the lifts of the terms to the upper half plane $\mathbb{H}$. If two terms corresponding to two intersection points are freely homotopic, then the length of their geodesic representatives with respect to $X$ is the same.  

We use the geometry of the product of geodesics in $\H$ and cosine rule of hyperbolic triangles to show that the angle of intersection (in the positive direction of both axes) only depends on the lengths of the curves $x$ and $y$ together with the length of the geodesic representative of the corresponding term of the Goldman bracket. As the lengths of the geodesic representatives of the corresponding terms are the same, the angles of intersection at both points must be the same. This hold for any $X$ in $\T(F)$ (Theorem \ref{stng}), and gives an obstruction for the equality of two terms.

For the proof of Theorem \ref{mt} we consider $x$ to be simple. We use Fenchel-Nielsen twist deformation to construct a new point $Y$ in $\T(F)$ where the angle of intersection at both points are different which leads to a contradiction (Figure \ref{opoaxis}). The main ingredient to show that the angles are different is Lemma \ref{ker}.

\textbf{Organization of the paper:} In Section \ref{Prel} we recall some basic results from hyperbolic geometry and Teichm{\"u}ller space. In Section \ref{closed} we prove that it is enough to consider the problem for closed surfaces. In Section \ref{lifts} and Section \ref{differ} we describe the lifts of two terms of Goldman bracket which are equal and show the obstruction for the equality of two terms in Theorem \ref{stng}. In Section \ref{chas} we give an alternative proof of  Theorem \ref{mt}. In Appendix \ref{appen} we provide  a proof of Lemma \ref{ker}. In Appendix \ref{trans} we discuss a small technical point regarding transverse intersection and double points.

\vspace*{.75cm}
\noindent\textbf{Acknowledgements:} The author would like to thank Siddhartha Gadgil for his encouragement and enlightening conversations. The author would also like to thank Moira Chas for her help and support. The author is supported by the Department of Science \& Technology (DST); INSPIRE faculty.  

\section{preliminaries}\label{Prel}
In this section we recall some basic facts about hyperbolic geometry, hyperbolic surfaces and Teichm{\"u}ller space. References for the results mentioned in this section are \cite{B},  \cite{Pr}, \cite{ker}.

Let $F$ be an oriented surface of negative Euler characteristic, i.e. $F$ be an oriented  surface of genus $g$ with $b$ boundary components and $n$ punctures such that, $2-2g-b-n< 0.$  Using uniformization theorem we can endow $F$ with a hyperbolic metric (possibly with punctures) with geodesic boundary. 

\subsection{Teichm{\"u}ller space \& hyperbolic geometry}
The Teichm{\"u}ller space $\T(F)$ of $F$, is defined as follows. Consider a pair $(X,\phi)$ where $X$ is a finite area surface with a hyperbolic metric and totally geodesic boundary and $\phi:F\rightarrow X$ is a diffeomorphism. We call $(X,\phi)$ a \emph{marked hyperbolic surface} and $\phi$ the \emph{marking} of $X$. We say $(X_1,\phi_1)$ and $(X_2,\phi_2)$ are equivalent if there exists an isometry $I:X_1\rightarrow X_2$ such that $I\circ \phi_1$ is homotopic to $\phi_2$. The Teichm{\"u}ller space  $\T(F)$ of $F$ is the space of all equivalence classes of marked hyperbolic surfaces. Abusing notation we denote the point $(X,\phi)$ in $\T(F)$ by $X$.

 By a \emph{hyperbolic surface $F_X$} we mean the surface $F$ together  with the point $X$ in $\T(F)$. When the choice of $X$ is clear from the context, we denote the hyperbolic surface $F_X$ simply by $F$. 
 
Given any hyperbolic surface $F$, we obtain an identification of the fundamental group $\pi_1(F)$ with a discrete subgroup of $PSL_2(\R)$ (the group of orientation preserving isometries of the upper half plane $\H$) up to conjugacy in $PSL_2(\R)$. We implicitly use this identification throughout the paper. The action of $\pi_1(F)$ on $\H$ is properly discontinuous and fixed point free. Therefore the quotient space is isometric to $F.$ Henceforth --unless otherwise specified-- by an isometry of $\H$, we mean an orientation preserving isometry.

A homotopically non-trivial closed curve in $F$ is called \emph{essential} if it is not homotopic to a puncture. By a \emph{lift} of a closed curve $\g$ to $\H$, we mean the image of a lift $\R\rightarrow \H$ of the map $\g\circ\pi$ where $\pi:\R\rightarrow S^1$ is the usual covering map. 

There are three types of isometries of $\H$, elliptic, parabolic and hyperbolic. An hyperbolic isometry $f$ has exactly two fixed points in the boundary $\partial\H$ of $\H$, one attracting and one repelling. The oriented geodesic from the repelling fixed point to the attracting fixed point is called the \emph{axis} of $f$ and is denoted by $A_f$. The isometry $f$ acts on $A_f$ by translation by a fixed positive number, called the \emph{translation length} of $f$ which we denote by $\t_f.$  

Since $\pi_1(F)$ acts on $\H$ without fixed points, $\pi_1(F)$ does not contain elliptic elements. Essential closed curves in $\pi_1(F)$ correspond to hyperbolic isometries and closed curves homotopic to punctures correspond to parabolic isometries.

\begin{figure}[h]
	\centering
	\includegraphics[trim = 65mm 45mm 50mm 38mm, clip, width=10cm]{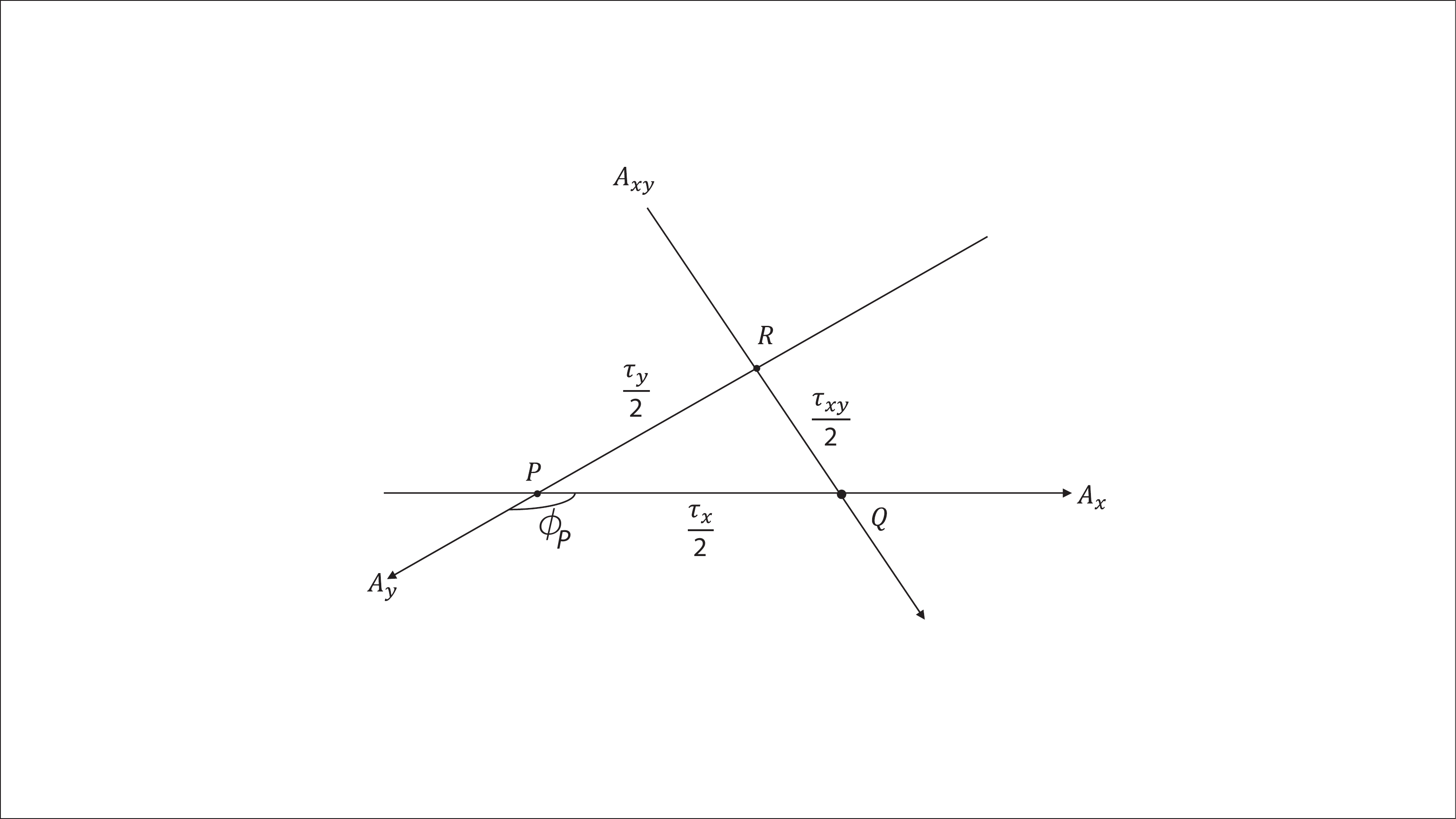}
	\caption{Axis of $xy$.}\label{axis}
\end{figure}

Let $x,y\in \pi_1(F)$ be two hyperbolic elements whose axes intersect at a point $P$ (Figure \ref{axis}). By \cite[Theorem 7.38.6]{B}, $xy$ is also hyperbolic. Let $Q$ be the point in $A_{x}$ at a distance $\tau_{x}/2$ from $P$ in the positive direction of $A_{x}$ and $R$ be the point in $A_{y}$ at a distance $\tau_{y}/2$ from $P$ in the negative direction of $A_{y}.$ Then the unique geodesic joining $R$ and $Q$ with orientation  from $R$ to $Q$ is the axis of $xy$ and the  distance between $Q$ and $R$ is $\tau_{xy}/2.$

Since there is a cannonical  bijective  correspondence between the set of all conjugacy classes in $\pi_1(F)$ and the set of all free homotopy classes of oriented closed curves in $F,$  given an oriented closed curve $x$ in $F$, we can (and will) denote both its free homotopy class and the corresponding conjugacy class in $\pi_1(F)$ by $\bx$.  Every free homotopy class of an essential closed oriented curve contains a unique closed oriented geodesic whose length is same as the translation length of any element of the corresponding conjugacy class.

\begin{remark} Let $\la x\ra,\la y\ra$ be any two free homotopy classes of oriented closed curves in $F$. Given any two points $X_1$ and $X_2$ in $\T(F)$, there is a natural identification between:
\begin{itemize}   
\item[a)] the intersection points between the geodesic representatives of $\la x\ra$ and $\la y\ra$ in $X_1$ and 
\item[b)]the intersection points between the geodesic representatives of $\la x\ra$ and $\la y\ra$ in $X_2$.
\end{itemize}
Throughout the paper we use this identification implicitly.	 
\end{remark}

\subsection{Fenchel-Nielsen twist deformation}
Given a point $X$ in $\T(F)$ and a simple closed geodesic $x$ in $X$, we define the \emph{Fenchel-Nielsen left twist deformation of $X$ at time $s$ along $x$} as follows: Cut the surface along $x$ to get a new (possibly disconnected) surface ${M}$ with geodesic boundary. Form a new hyperbolic surface $X_s$ by gluing the two boundary components of $M$ obtained from $x$ with a left twist of distance $s$, i.e. the images of a  point of $x$ in the two boundaries of ${M}$ are distance $s$ apart in the image of $x$ in $X_s$. Notice that when viewed $x$ as a boundary of ${M}$, the orientation of $F$ provides a unique notion of left and right twists along $x$ (i.e. no orientation of $x$ is required). We call the Fenchel-Nielsen left twist deformation just \emph{left twist deformation}. 

 To consider $X_s$ as a point in $\T(F)$, we have to construct a homotopy class of diffeomorphism from $F$ to $X_s$. Let $N$ be a small annular neighbourhood of $x$ in $X$ and $\bar{N}$ be a small annular neighbourhood of the image of $x$ in $X_s$. Define a diffeomorphism from $N$ to $\bar{N}$ which is homotopic to the left twist deformation at time $s$ relative to $\partial N$. From the definition it follows that we only deform $X$ in a small neighbourhood of $x$. Therefore we extend the deffeomorphism from $N$ to $\bar{N}$ to a diffeomorphism from $X$ to $X_s$ by identity. The composition of this deffeomorphism with the marking of $X$ gives the desired  homotopy class of diffeomorphism from $F$ to $X_s$.
 
 Throughout the paper we fix the anticlockwise orientation of the upper half plane $\H$ and for any $X\in \T(F)$ we identify the universal cover of $X$ with $\H$ preserving this orientation.
 
\begin{figure}[h]
	\centering
	\includegraphics[trim = 65mm 75mm 85mm 50mm, clip, width=8cm]{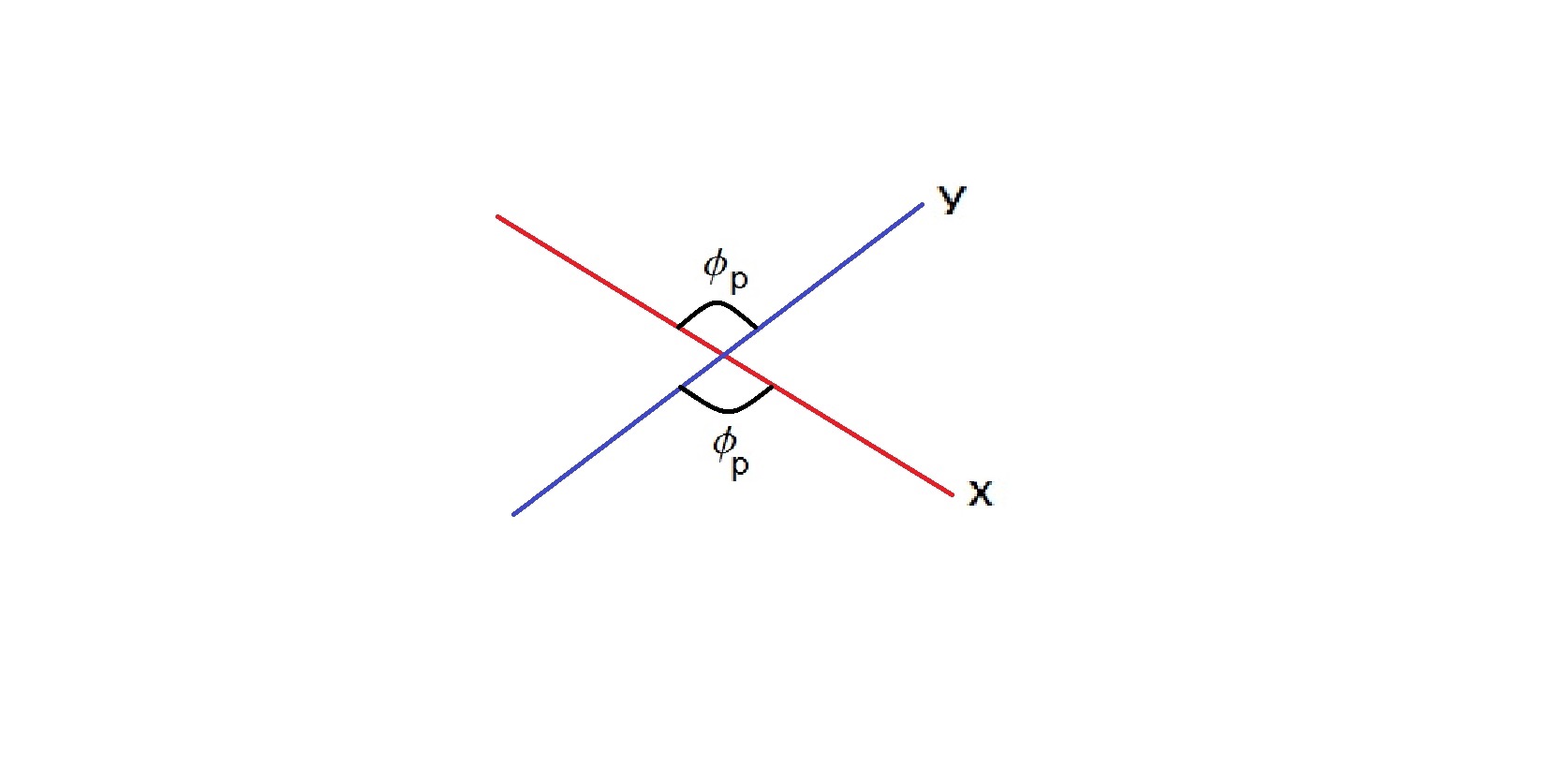}
	\caption{$\phi_p(X)$.}\label{anticlockwise}
\end{figure} 
 
 Let $X$ be a point in $\T(F)$ and $x, y$ be any two intersecting closed geodesics in $X$. Let $p$ be an intersection point between $x$ and $y$ and $\phi_p(X)\in [0,\pi)$ be the angle of intersection between $x$ and $y$ at $p$, where the angle is considered \emph{anticlockwise from $y$ to $x$} (notice that for definition of $\phi_p(X)$, orientation of $x$ and $y$ is not required) see Figure \ref{anticlockwise}. When $X$ is clear from the context, we denote the angle $\phi_p(X)$ simply by $\phi_p$. 
 
 Lemma \ref{ker} stated below,  is crucial to prove Theorem \ref{mt}. If $x$ and $y$ are simple, the lemma follows from \cite[Proposition 3.5]{ker}. Although in \cite[Remark on page 254]{ker} the author mentioned that the same proof works even if $y$ is not simple, for the sake of completeness we give the proof in Appendix \ref{appen}.  

\begin{lemma}\label{ker}
Suppose $X$ is any point in $\T(F)$. Let $x$ be a simple closed geodesic and $y$ be any other closed geodesic in $X$. For every intersection point $p$ between $x$ and $y$, the function $\phi_p(X_s)$ is a strictly decreasing function of $s$.

\end{lemma}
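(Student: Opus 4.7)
The plan is to work in the universal cover $\H$ and compute the angle $\phi_p(X_s)$ as an explicit function of $s$. I would normalize the holonomy representations $\rho_s : \pi_1(F) \to PSL_2(\R)$ so that $\rho_s(\alpha) = \rho(\alpha)$ for every $s$, where $\alpha \in \pi_1(F)$ represents $x$; this is possible because the twist preserves the translation length of $x$. Arrange coordinates so that the common axis $\tilde{x}_0$ of $\rho_s(\alpha)$ is the imaginary axis in $\H$ with positive direction upward, so that a left twist by $s$ along $\tilde{x}_0$ becomes the translation $T_s \colon z \mapsto e^s z$.

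The key step is to describe $\rho_s(\gamma)$ for a general element $\gamma \in \pi_1(F)$. Using the decomposition of $\pi_1(F)$ along the simple curve $x$---the HNN extension structure if $x$ is non-separating, or the amalgamated free product structure if $x$ is separating---any $\gamma$ admits a normal form. Under the twist deformation, $\rho_s$ is obtained from $\rho$ by inserting the translation $T_s$ at each occurrence of the stable letter or amalgamation crossing in this normal form. Applied to the element $\gamma_p \in \pi_1(F)$ whose axis in $X$ is the lift $\tilde{y}_0$ of $y$ through a fixed lift $\tilde{p}$ of $p$ on $\tilde{x}_0$, this gives an explicit formula for $\rho_s(\gamma_p) \in PSL_2(\R)$. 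From it one reads off the axis $\tilde{y}_s = A_{\rho_s(\gamma_p)}$, its intersection $\tilde{p}_s$ with $\tilde{x}_0$, and the angle $\phi_p(X_s)$ at $\tilde{p}_s$, for instance via the standard formula for the angle between two geodesics in terms of the endpoints at $\partial \H$ and the tangent vectors at the crossing.

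Having the explicit formula, the final step is to differentiate $\phi_p(X_s)$ with respect to $s$ and verify that the derivative is strictly negative. The sign comes from the left-twist convention: inserting positive translations $T_s$ rotates the endpoints of $\tilde{y}_s$ on $\partial \H$ in a direction that decreases the anticlockwise angle measured from $\tilde{y}_s$ to $\tilde{x}_0$. The main obstacle is the case where $y$ has several intersections with $x$, so that $\gamma_p$ has a long normal form and the derivative of $\phi_p$ decomposes as a sum of contributions, one from each crossing of $\tilde{y}_s$ with a lift of $x$. Showing that these contributions all have the same sign---so the total derivative is strictly negative rather than cancelling---is the geometric heart of the argument, and it is precisely this bookkeeping that goes beyond Kerckhoff's original proof (where both $x$ and $y$ are simple and $\gamma_p$ has a very short normal form) and justifies a separate appendix for the non-simple case of $y$.
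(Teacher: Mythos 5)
Your outline stalls exactly at the point where the lemma actually needs to be proved. Everything up to the last paragraph is setup: normalizing $\rho_s$ so that $\rho_s(\alpha)=\rho(\alpha)$, realizing the twist as insertion of (conjugates of) $T_s\colon z\mapsto e^s z$ at the syllables of the HNN/amalgam normal form of $\gamma_p$, and in principle extracting the axis of $\rho_s(\gamma_p)$. But the assertion that the derivative of $\phi_p(X_s)$ is a sum of same-signed contributions, one per crossing, is precisely the content of the lemma, and you explicitly defer it (``the geometric heart of the argument'') rather than prove it. Nothing in the proposal explains why the contributions cannot cancel when $y$ crosses $x$ many times with both signs of intersection; note that the lemma is stated for the angle $\phi_p$, which is defined \emph{without} reference to orientations of $x$ and $y$, and the monotonicity is claimed at \emph{every} intersection point regardless of $\e(p)$. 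An explicit differentiation of the trace/axis formula for a long normal form would be quite delicate, and you give no mechanism that forces the sign. So as written this is a plan with the decisive step missing, not a proof.

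For comparison, the paper's proof fills exactly this gap by a synthetic argument that avoids all computation. One lifts $y$ to a geodesic $\g$ through a lift $P$ of $p$; after the twist, the deformed curve $\g_s$ is a concatenation of geodesic arcs, sliding left by $s$ each time a lift of $x$ is crossed. Extending the arcs $A_i$ to rays, one shows the endpoint of $A_{i+1}$ lies strictly to the left of that of $A_i$: if not, the two rays would meet and bound a triangle whose angle sum is at least $\pi$, contradicting Gauss--Bonnet. Hence the endpoints of the rays move monotonically left and converge to the endpoints of the geodesic $\overline{\g_s}$, which therefore lie strictly to the left of those of $\g$; this is what makes $\phi_p(X_s)$ strictly decreasing. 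If you want to salvage your computational route, you would need to supply an argument of comparable force for the sign of each term in your sum --- at which point you will likely find yourself reproving this Gauss--Bonnet step in coordinates.
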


\begin{remark}
We define the angles and ``left" with respect to the anticlockwise orientation of $\H$ and the corresponding orientation of the hyperbolic surface $X\in \T(F)$ (which implies that the angle $\phi_p(X)$ is \emph{strictly decreasing} instead of \emph{strictly monotone}).
\end{remark}

\begin{remark}
The geometric intersection number between a boundary curve or a curve homotopic to puncture and any other closed curve is zero. Also given any hyperbolic surface with punctures, there exists a neighbourhood around every puncture such that every essential closed geodesic is disjoint from these neighbourhoods (i.e. all essential geodesics lie in a compact subset of the surface) (see \cite[Theorem 1.2]{MR}).  Therefore for our discussion there is no difference between a compact surface with boundary and a punctured surface with the same fundamental group. Hence for the rest of the paper we consider only  \emph{compact surfaces}.        
\end{remark}

\begin{remark} In the definition of the Goldman bracket, we would like to choose the  two representatives to be the geodesic representatives of the free homotopy classes. The only problem is, two geodesics not always intersect each other in double points. In  Appendix \ref{trans} we resolve this issue by providing an equivalent definition of Goldman bracket which does not require the representatives to intersect in double points.  
\end{remark}

\section{Surfaces with boundary}\label{closed}
The aim of this section is to prove that for the proof of Theorem \ref{mt}, it is enough to consider surfaces without boundary.

Let $F$ be a hyperbolic surface with geodesic boundary. To each boundary component $\delta$, we attach a hyperbolic surface of genus one with one boundary component of length $l(\delta)$ by an isometry. We call the new surface $\bar{F}$.  Therefore there is a natural inclusion $i: F\rightarrow \bar{F}$. Notice that the gluing preserves the boundary lengths in $F$ and does not change the metric in $F$. Therefore the inclusion $i$ is an isometric inclusion.
\begin{lemma}\label{inclusion}
	The map $i$ induces an injective Lie algebra homomorphism from $\mathcal{L}(F)$ into $\mathcal{L}(\bar{F})$.
\end{lemma}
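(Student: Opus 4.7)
The inclusion $i:F\hookrightarrow \bar F$ is orientation-preserving and isometric, so it sends each free homotopy class of oriented closed curves in $F$ to a well-defined class in $\bar F$. The plan is to (a) define $i_*:\Z(\C(F))\to\Z(\C(\bar F))$ by $\la x\ra_F\mapsto \la i(x)\ra_{\bar F}$ extended $\Z$-linearly, (b) verify it intertwines the Goldman brackets, and (c) verify injectivity. Steps (a) and (b) are essentially formal; step (c) is the main content.

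\textbf{Bracket compatibility.} For (b), given $\la x\ra,\la y\ra\in\C(F)$ I would pick transverse double-point representatives $x,y$ in $F$. Since $i$ is a smooth embedding, $i(x)$ and $i(y)$ remain transverse in $\bar F$ and intersect precisely in $i(x\cap y)$ --- no new intersections can appear because both curves are contained in $i(F)$, and $i$ is injective. The sign $\e(p)$ at each intersection is a local invariant of orientations, hence preserved since $i$ is orientation-preserving, and the loop product satisfies $i(x)*_pi(y)=i(x*_py)$ as based loops. Summing over $p\in x\cap y$ gives
\[ [i_*\la x\ra,i_*\la y\ra]_{\bar F}=\sum_{p\in x\cap y}\e(p)\,\la i(x*_py)\ra_{\bar F}=i_*[\la x\ra,\la y\ra]_F, \]
which extends $\Z$-linearly to the full bracket identity.

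\textbf{Injectivity.} Because $\Z(\C(F))$ is free abelian on $\C(F)$, injectivity of $i_*$ reduces to showing the set map $\C(F)\to\C(\bar F)$ is injective. This is where the \emph{isometry} hypothesis is essential (a purely topological inclusion would not suffice a priori). Every essential class $\la x\ra_F$ has a unique closed geodesic representative $\g_x\subset F$; since the metric on $\bar F$ restricts to the metric on $F$, the same $\g_x$ is a closed geodesic in $\bar F$, and by uniqueness of geodesic representatives in $\bar F$ it is the geodesic representative of $\la i(x)\ra_{\bar F}$. Distinct essential classes in $F$ have distinct geodesic representatives (as oriented closed curves, with multiplicity), hence distinct images under $i_*$. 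The trivial class maps to the trivial class and cannot coincide with an essential image, since the latter has positive translation length.

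\textbf{Main obstacle.} The only non-routine point is injectivity, specifically the need to rule out that two distinct conjugacy classes in $\pi_1(F)$ become conjugate in $\pi_1(\bar F)$. One could approach this group-theoretically via the amalgamated product / van Kampen decomposition of $\pi_1(\bar F)$ along the simple geodesics of attachment, using a normal-form argument. The hyperbolic-geometric route sketched above bypasses this by exploiting uniqueness of closed geodesic representatives directly, which is the cleanest way given that $i$ is an isometric embedding.
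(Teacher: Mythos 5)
Your proposal is correct and follows essentially the same route as the paper: injectivity is reduced to injectivity of the induced map on conjugacy classes, which is established exactly as you do, via uniqueness of closed geodesic representatives together with the fact that $i$ is an isometric inclusion. The only difference is that you spell out the bracket-compatibility verification, which the paper's proof leaves implicit.
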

\begin{proof}
	 We claim that two elements $\A$ and $\B$  are conjugate in $\pi_1(F)$ if and only if $i(\A)$ and $i(\B)$ are conjugate in $\bar{F}$. This follows from the fact that $i$ is an isometric inclusion and every free homotopy class corresponding to the conjugacy class contains a unique geodesic.  
	 
	 Therefore $i$ is an injection from the conjugacy classes in $\pi_1(F)$ to the conjugacy classes in $\pi_1(\bar{F})$. Extending $i$ linearly, we obtain an injective Lie algebra homomorphism from $\mathcal{L}(F)$ into $\mathcal{L}(\bar{F})$.\end{proof}

For the rest of the paper we assume the surfaces to be without boundary unless  otherwise specified.
\section{Lifts of a term in the Goldman bracket}\label{lifts}

Descriptions of the lifts of the terms in the Goldman bracket have been studied in detail in \cite[Section 7]{GC} and \cite[Section 4]{K1}. In this section we provide a self-contained description needed for our results. 

Fix any $X\in\mathcal{T}(F)$ and consider $F$ equipped with the corresponding hyperbolic metric. Consider the term $x*_py$ of the Goldman bracket between two oriented closed geodesics $x$ and $y$ on $F$ corresponding to the intersection point $p$. Choose a lift $P$ of $p$ in $\H$. Without loss of generality we assume that $P\in A_x$. The lift of  $x*_py$ in $\mathbb{H}$, passing through $P$ is a bi-infinite oriented polygonal path $\g$ which is the concatenation of geodesic arcs $\g_i$ with the following description (see Figure \ref{zigzag}): There exists a conjugate $y_0$ of $y$ such that $P=A_x\cap A_{y_0}$. Let $\g_0$ be the geodesic arc of length $\tau_{x}$ on $A_x$ starting from $P$ in the positive direction of $A_x$. There is a conjugate $y_1$ of $y$ such that the endpoint of $\g_0$ is the intersection of $A_x$ and $A_{y_1}$. Let $\g_1$ be the geodesic arc of length $\tau_{y}$ on $A_{y_1}$ starting from the endpoint of $\g_0$ in the positive direction of  $A_{y_1}$. Inductively we define $\g_i$ by the same process for each positive integer $i$. 

\begin{figure}[h]
	\centering
	\includegraphics[trim = 0mm 40mm 10mm 20mm, clip, width=6.5cm]{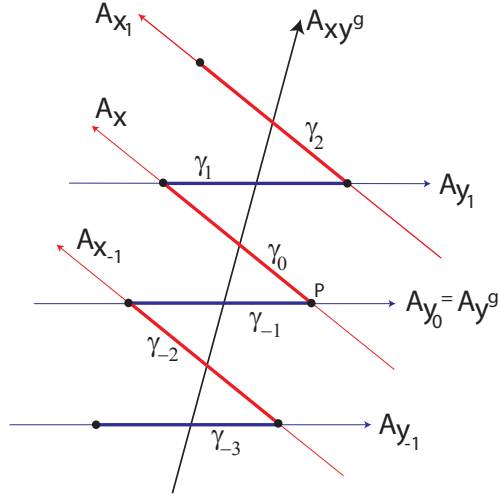}
	\caption{A lift $\g$ of ($x*_py$) to $\H$. It is a concatenation geodesic arcs  $\g_i$. Each $\g_i$ lies on the axis of an element either conjugate to $x$ or conjugate to $y$. The intersection points between $\g_i$ and $\g_{i+1}$ are the lifts of the intersection point $p$.}\label{zigzag}
\end{figure}

Similarly let $\g_{-1}$ be the geodesic arc of length $\tau_{y}$ on $A_{y_0}$  in the negative direction of $A_{y_0}$ ending at $P$. Again there is a conjugate $x_{-1}$ of $x$ such that the beginning of $\g_{-1}$ is the intersection point of $A_{x_{-1}}$ and $A_{y_0}$. Define $\g_{-2}$ to be the geodesic arc of length $\tau_{x}$ on $A_{x_{-1}}$  in the negative direction of $A_{x_{-1}}$ ending at the beginning of $\g_{-1}$.  Inductively we define $\g_i$ by the same process for each negative integer $i$.

In every free homotopy class of curves with endpoints in $\partial\H$ in  $\H$ (fixing endpoints in $\partial\H$) there is a unique hyperbolic geodesic. Suppose $y_{0}=y^g$ for some $g\in \pi_1(F)$. From the description of the product of isometries in Section \ref{Prel} and the symmetry of the Figure \ref{zigzag} around the geodesic $A_{xy^g}$, we have

\begin{proposition}\label{axlemma}
	 The geodesic in the free homotopy class of $x*_py$ is the projection of the axis of  $xy^g$ on $X$.  
	 Furthermore the axis of the geodesic of $xy^g$ intersects each $\g_i$ at their midpoints.
\end{proposition}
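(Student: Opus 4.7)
The plan is to prove both assertions by carefully identifying the deck transformation represented by the based loop $x*_py$, and then invoking the midpoint/axis formula already recorded in Section \ref{Prel} (Figure \ref{axis}).

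First I would set up the algebraic identifications at the basepoint. Fix the lift $P\in A_x\cap A_{y_0}$ of $p$ and use it to identify $\pi_1(F,p)$ with the deck group acting on $\H$. Under this identification, the based representative of $\la x\ra$ at $p$ (which runs once around $x$ starting and ending at $p$) corresponds to the hyperbolic isometry whose axis is $A_x$ and whose translation length is $\t_x$; this is exactly the element I will continue to denote by $x$. The based representative of $\la y\ra$ at $p$ corresponds analogously to the hyperbolic isometry with axis $A_{y_0}$ and translation length $\t_y$, namely $y_0=y^g$. Hence, in $\pi_1(F,p)$, the based loop $x*_py$ represents $x\cdot y^g$, and its free homotopy class is the conjugacy class of $xy^g$. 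Because $A_x$ and $A_{y^g}$ meet transversally at $P$, the product $xy^g$ is hyperbolic (by the result of Beardon quoted in Section \ref{Prel}), so its conjugacy class contains a unique closed geodesic, namely the projection of $A_{xy^g}$ to $X$. This proves the first assertion.

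For the second assertion I would apply the formula from Section \ref{Prel} (Figure \ref{axis}) verbatim with the pair $(f,f')=(x,y^g)$. Let $Q_0$ be the point of $A_x$ at distance $\t_x/2$ from $P$ in the positive direction of $A_x$, and let $R_0$ be the point of $A_{y^g}$ at distance $\t_y/2$ from $P$ in the negative direction of $A_{y^g}$. That formula says exactly that $A_{xy^g}$ is the oriented geodesic from $R_0$ to $Q_0$, and $d(R_0,Q_0)=\t_{xy^g}/2$. By construction $\g_0$ is the arc of length $\t_x$ on $A_x$ starting at $P$ in the positive direction, so $Q_0$ is the midpoint of $\g_0$; likewise $\g_{-1}$ is the arc of length $\t_y$ on $A_{y^g}=A_{y_0}$ ending at $P$, so $R_0$ is the midpoint of $\g_{-1}$. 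Thus $A_{xy^g}$ crosses both $\g_0$ and $\g_{-1}$ at their midpoints.

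To upgrade this to every $\g_i$, I would use equivariance under $xy^g$. Because $\g$ is, by construction, a lift of the full iterated loop $x*_py$, the deck transformation $xy^g$ carries $\g$ to itself, advancing the indexing by one period of the based loop, that is $(xy^g)(\g_i)=\g_{i+2}$. On the other hand $xy^g$ preserves $A_{xy^g}$ (it acts as translation by $\t_{xy^g}$). Therefore for every $k\in\Z$,
\[
A_{xy^g}\cap \g_{2k}=(xy^g)^k\bigl(A_{xy^g}\cap\g_0\bigr)=(xy^g)^k(Q_0),
\]
and since $xy^g$ is an isometry sending $\g_0$ to $\g_{2k}$ it sends the midpoint to the midpoint; the same argument starting from $R_0\in\g_{-1}$ handles the odd-indexed arcs.

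The routine but only delicate step is the first one: making sure that the word $xy^g$ in $\pi_1(F)$ really is the one represented by the based loop $x*_py$, with the correct convention for conjugation determined by the chosen lift $P$ and the orientations of $A_x$ and $A_{y_0}$. Once that is nailed down, both conclusions follow directly from the product-of-axes picture of Section \ref{Prel} together with an equivariance argument, so no further hard input is needed.
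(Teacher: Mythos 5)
Your proposal is correct and follows essentially the same route as the paper, which derives Proposition \ref{axlemma} directly from the product-of-axes description in Section \ref{Prel} together with the symmetry of the lifted zigzag about $A_{xy^g}$. Your explicit equivariance argument ($(xy^g)(\g_i)=\g_{i+2}$) is simply a careful spelling-out of that symmetry, and your flag about pinning down the conjugation convention for the deck transformation of $x*_py$ is the one genuinely delicate point in both versions.
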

We call the geodesic $A_{xy^g}$ to be the \emph{axis of the lift} $\g$.

\section{Proof of Theorem \ref{stng} }\label{differ}

Let $X$ be a point in $\T(F)$ and $x, y$ be two oriented closed geodesics in $X$. Suppose $p$ and $q$ are two intersection points between $x$ and $y$ such that $\la x*_py\ra=\la x*_qy\ra$.  

 For any two oriented geodesics, by angle between them  at any intersection point we mean the angle which is in between the positive direction of both curves, unless otherwise specified. For any point $X\in \T(F)$, we denote $\theta_p(X)$ to be the angle of intersection between $x$ and $y$ at $p$  in $X$.  When $X$ is clear from the context, we denote the angle $\theta_p(X)$ simply by $\theta_p$.
 
 \begin{remark} The angles $\theta_p(X)$ and $\phi_p(X)$ are either congruent or supplementary. 
 \end{remark}
 
 Let $P$ and $Q$ be two lifts of $p$ and $q$ respectively in $\H$, lying in $A_x$. There exist two conjugates $y_1$ and $y_2$ of $y$ such that $P=A_x\cap A_{y_1}$ and $Q=A_x\cap A_{y_1}$. Let $R_1$ (respectively $R_2$) be the point in $A_x$ at a distance $\tau_x/2$ from $P$ (respectively from $Q$) in the positive direction of $A_x$. Similarly let  $S_1$ (respectively $S_2$) be the point in $A_{y_1}$ (respectively $A_{y_2}$) at a distance $\tau_y/2$ from $P$ (respectively from $Q$) in the negative direction of $A_{y_1}$ (respectively $A_{y_2}$). By  definition, the angle between $PR_1$ and $PS_1$ at $P$ is $\pi-\theta_p$ and the angle between $QR_2$ and $QS_2$ at $Q$ is $\pi-\theta_q$ (see Figure \ref{axsign}).
 
  \begin{figure}[h]
  \centering
    \includegraphics[trim = 10mm 30mm 90mm 10mm, clip, width=12cm]{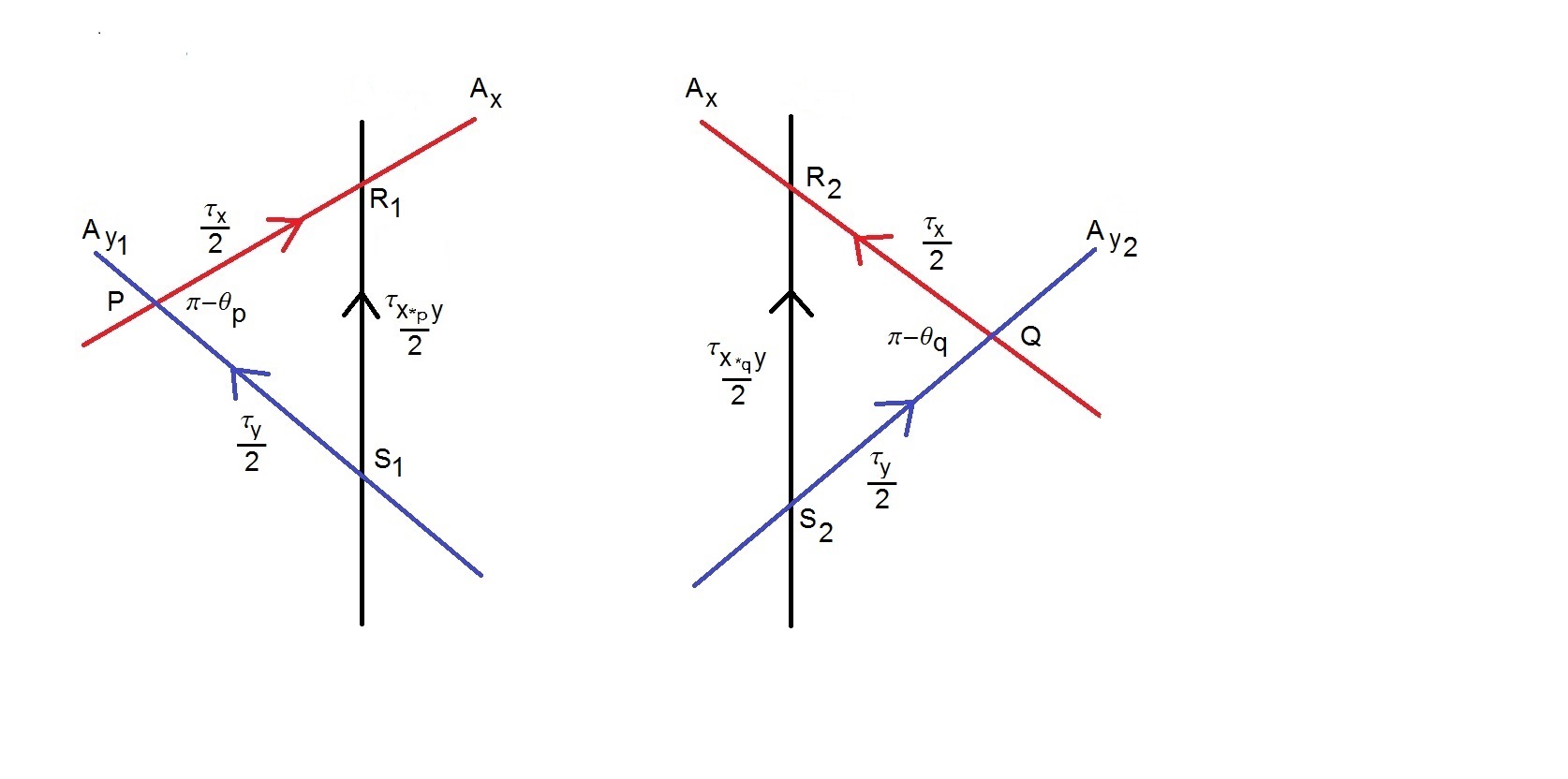}
     \caption{In the above figure we have chosen $\epsilon(p)=\epsilon (P)$ to be $+1$ and $\epsilon (q)=\epsilon (Q)$ to be $-1$. }\label{axsign}
\end{figure}

Consider the triangles $\Delta PR_1S_1$ and $\Delta QR_2S_2$. From the above description we have $$l_X(PR_1)=l_X(QR_2)=\tau_x/2\,\,\,\,\text{and}\,\,\,\, l_X(PS_1)=l_X(QS_2)=\tau_y/2.$$

Also from the Proposition \ref{axlemma} and the assumption $\la x*_py\ra=\la x*_qy\ra$, we have 
 \begin{eqnarray}\label{eqnl}
 \frac{l_X(x*_py)}{2}=l_X(R_1S_1)=l_X(R_2S_2)=\frac{l_X(x*_qy)}{2}.
 \end{eqnarray}
 Therefore by the cosine rule of hyperbolic triangles \cite[\S 7.12]{B}, we have
\begin{eqnarray}\label{eqn} 
 \theta_p(X)=\theta_q(X) . 
 \end{eqnarray}
 
 As $X\in \T(F)$ is arbitrary, the above equation holds for all $X\in\T(F)$. Therefore we have to following theorem.

 \begin{theorem}\label{stng}  Let $X$ be a point in $\T(F)$. If $x$ and $y$ are two oriented closed geodesics in $X$ with intersection points $p$ and $q$ such that $\la x*_py\ra=\la x*_qy\ra$ then  $\theta_p(Y)=\theta_q(Y)$ for all $Y\in \T(F).$
 \end{theorem}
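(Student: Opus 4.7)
The plan is to reduce the claim to an application of the hyperbolic cosine rule to two carefully chosen triangles in $\H$, one built at a lift of $p$ and one at a lift of $q$, and to show these triangles are congruent because two sides are forced equal by the translation lengths of $x$ and $y$ while the third side is forced equal by the hypothesis $\la x *_p y\ra = \la x *_q y\ra$ together with Proposition \ref{axlemma}. Since the construction works at any $X \in \T(F)$, the conclusion will hold for every $Y \in \T(F)$.

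First I would fix $X \in \T(F)$ and choose lifts $P$ of $p$ and $Q$ of $q$ on the axis $A_x$ of a chosen lift of $x$. Since $p$ and $q$ are intersection points with $y$, there exist conjugates $y_1, y_2$ of $y$ (in $\pi_1(F)$) such that $P = A_x \cap A_{y_1}$ and $Q = A_x \cap A_{y_2}$. Following the recipe for the axis of a product recalled in Section \ref{Prel}, I let $R_1$ (respectively $R_2$) be the point on $A_x$ at distance $\tau_x/2$ from $P$ (respectively $Q$) in the positive direction of $A_x$, and $S_1$ (respectively $S_2$) the point on $A_{y_1}$ (respectively $A_{y_2}$) at distance $\tau_y/2$ from $P$ (respectively $Q$) in the negative direction of $A_{y_1}$ (respectively $A_{y_2}$). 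Then $R_iS_i$ is precisely the axis of the product isometry that represents the corresponding term, and the angles at $P$ and $Q$ inside the triangles $\Delta PR_1S_1$ and $\Delta QR_2S_2$ are $\pi - \theta_p(X)$ and $\pi - \theta_q(X)$ respectively.

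Second, I would compare the two triangles side by side. Two pairs of sides are equal by construction, namely $l_X(PR_1) = l_X(QR_2) = \tau_x/2$ and $l_X(PS_1) = l_X(QS_2) = \tau_y/2$. For the third side, Proposition \ref{axlemma} identifies $R_iS_i$ with an arc of length equal to half the translation length of the conjugacy class of the corresponding term, so the hypothesis $\la x *_p y\ra = \la x *_q y\ra$ gives $l_X(R_1S_1) = l_X(R_2S_2)$. By the hyperbolic cosine rule, the angles at $P$ and $Q$ in the two triangles agree, whence $\theta_p(X) = \theta_q(X)$. Since $X$ was arbitrary in $\T(F)$, the equality $\theta_p(Y) = \theta_q(Y)$ holds for every $Y \in \T(F)$.

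The only point that needs care is Proposition \ref{axlemma}: specifically, the identification of $l_X(R_iS_i)$ with half the translation length of the term, which relies on the symmetry of the zigzag lift about the axis of the product and the standard formula for $\tau_{xy^g}/2$ in Section \ref{Prel}. The cosine-rule step and the extraction of $\theta_p, \theta_q$ from the triangle angles are routine once the picture is correctly drawn; the main conceptual content is the translation of the equality-of-free-homotopy-classes hypothesis into the metric equality of one side of a hyperbolic triangle.
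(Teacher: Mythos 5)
Your proposal is essentially identical to the paper's own argument: the same lifts $P,Q$ on $A_x$, the same points $R_i, S_i$ at distances $\tau_x/2$ and $\tau_y/2$, the same identification of $l_X(R_iS_i)$ with half the length of the corresponding term via Proposition \ref{axlemma}, and the same application of the hyperbolic cosine rule followed by the observation that $X$ was arbitrary. No gaps; this matches the paper's proof step for step.
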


\begin{remark}\label{rmkstng} For the Theorem \ref{stng}, \emph{we do not need $x$ to be simple}.  Clearly Equation \ref{eqn} gives an obstruction for the equality of terms in the Goldman bracket. In the next section we show that if we assume $x$ to be simple in Theorem 
\ref{stng} and $\e(p)\neq \e(q)$ then there exists $X\in \T(F)$ such that  $\theta_p(X)\neq \theta_q(X)$. It would be interesting to see other examples for which the Equation \ref{eqn} fails.
\end{remark} 

\begin{remark} The above obstruction is geometric not topological. In \cite [Problem 13.4]{Ch1}, Chas asked the following question: ``How does one characterize topololgically pairs of intersection points for which the corresponding terms in the Goldman bracket cancel?" Although Theorem \ref{stng} does not solve the problem, the fact that the angles are congruent for \emph{all} metrics makes it \emph{almost} topological. It might give a hint of how to find a characterization in topological terms. Also the duality between the equalities \ref{eqnl} and \ref{eqn} can be used to construct length equivalent curves (see \cite{K2}). For topological properties of length equivalent curves see \cite{L}.  
\end{remark}

\section{Proof of the Theorem \ref{mt}}\label{chas}
We prove the following lemma from which Theorem \ref{mt} follows. 

\begin{lemma}
Let $X$ be any point in $\T(F)$ and  $x,y$ be two oriented closed geodesics on $X$ intersecting each other. Suppose $x$ is a simple geodesic. If $p,q\in x\cap y$ such that $\e(p)=-\e(q)$ then $\la x*_py\ra\neq \la x*_qy\ra$.
\end{lemma}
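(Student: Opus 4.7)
The plan is to argue by contradiction: assuming $\la x*_py\ra = \la x*_qy\ra$, I would use Theorem \ref{stng} to force the oriented angles at $p$ and $q$ to coincide throughout $\T(F)$, translate this into an identity between the unoriented angles $\phi_p$ and $\phi_q$ using the sign hypothesis $\e(p) = -\e(q)$, and then break the identity by the Fenchel--Nielsen twist along the simple geodesic $x$.

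The crucial preliminary step is to pin down the ``congruent or supplementary'' dichotomy between $\theta_p$ and $\phi_p$ in terms of $\e(p)$: I would prove that, for every $Y \in \T(F)$, one has $\phi_p(Y) = \theta_p(Y)$ when $\e(p)$ takes one sign and $\phi_p(Y) = \pi - \theta_p(Y)$ when $\e(p)$ takes the other. To see this, one picks a local orthonormal frame at $p$ compatible with the orientation of $F$, places the positive tangent of $y$ along $(1,0)$ and that of $x$ along $(\cos\alpha, \sin\alpha)$, and observes that $\alpha \in (0,\pi)$ versus $\alpha \in (\pi, 2\pi)$ correspond to opposite signs of $\det(x',y')$ --- i.e., opposite values of $\e(p)$ --- while a direct inspection shows that $\phi_p$ equals $\theta_p = \alpha$ in the first case and $\pi - \theta_p = \alpha - \pi$ in the second.

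Combined with $\e(p) = -\e(q)$ and with the equality $\theta_p(Y) = \theta_q(Y)$ from Theorem \ref{stng}, this gives
\[
\phi_p(Y) + \phi_q(Y) = \pi \qquad \text{for every } Y \in \T(F).
\]
Applied to the one-parameter family $X_s \in \T(F)$ produced by the left twist deformation along $x$, the left-hand side is then constant in $s$. But Lemma \ref{ker}, applied to each of $p$ and $q$, guarantees that both $s \mapsto \phi_p(X_s)$ and $s \mapsto \phi_q(X_s)$ are strictly decreasing in $s$, hence so is their sum --- a contradiction.

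The only substantive obstacle is the orientation bookkeeping in the preliminary step: one must be scrupulous about the conventions for $\e(p)$, for the anticlockwise orientation of $\H$ (and the induced orientation of $F$), and for the definition of $\phi_p$, so that the sign flip $\e(p) = -\e(q)$ corresponds precisely to the supplementary relation between the two angles. Once that calibration is in place, Theorem \ref{stng} and Lemma \ref{ker} yield the contradiction essentially for free.
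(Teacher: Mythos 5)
Your proposal is correct and follows essentially the same route as the paper: contradiction via Theorem \ref{stng}, the sign-dependent dichotomy $\phi_p=\theta_p$ versus $\phi_p=\pi-\theta_p$, and Lemma \ref{ker} applied to the Fenchel--Nielsen twist along the simple geodesic $x$. The only cosmetic difference is that you phrase the contradiction as the constancy of $\phi_p+\phi_q=\pi$ against the strict decrease of both summands, whereas the paper notes that $\theta_p(X_s)$ is strictly increasing while $\theta_q(X_s)$ is strictly decreasing yet they must remain equal; these are the same argument.
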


\begin{proof}

\begin{figure}[h]
  \centering
    \includegraphics[trim = 30mm 60mm 30mm 35mm, clip, width=12cm]{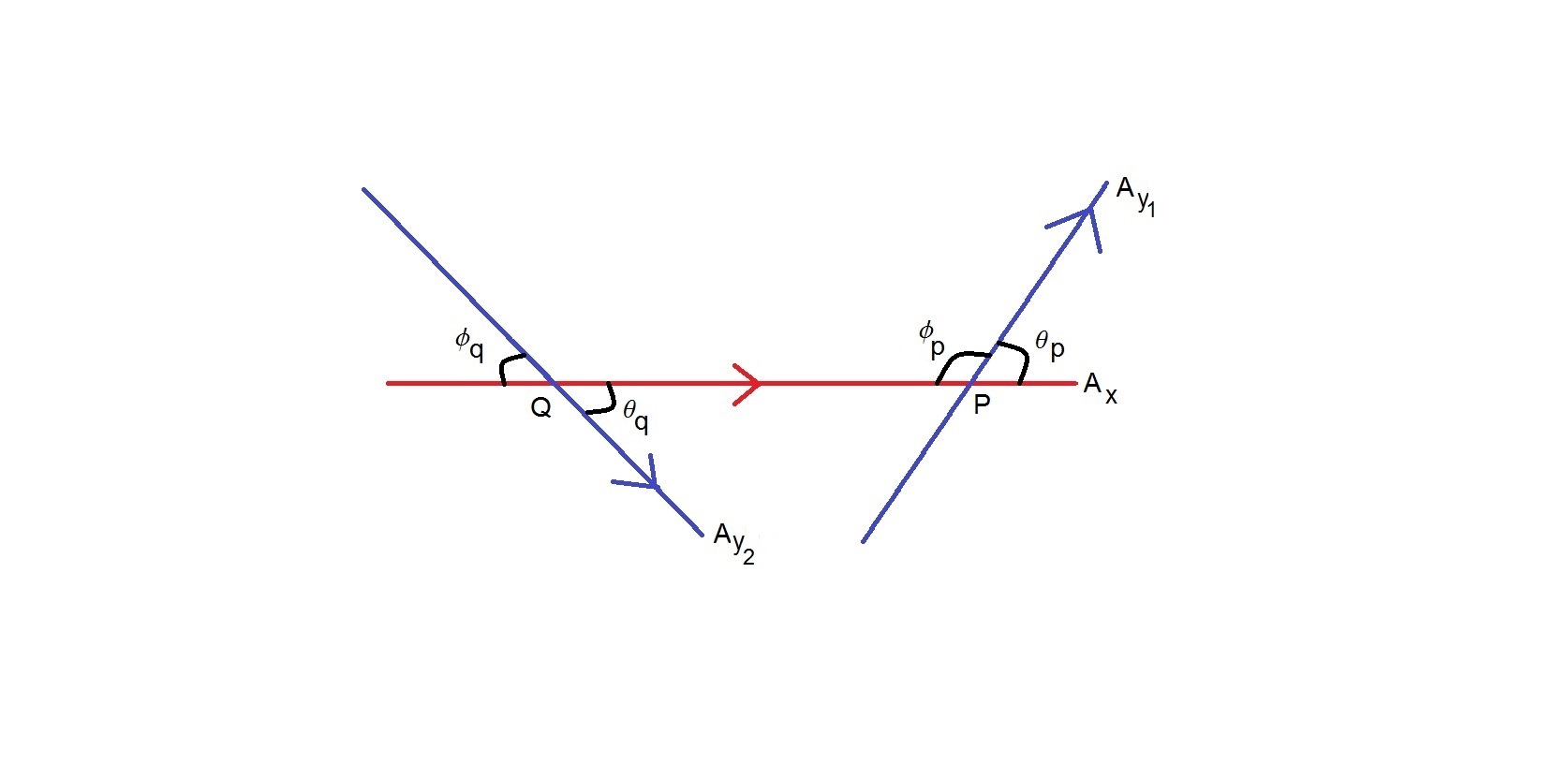}
     \caption{Two lifts $P$ and $Q$ of the points $p$ and $q$ respectively.  }\label{opoaxis}
\end{figure} 

We prove the lemma by contradiction. Suppose $\e(p)=-\e(q)$ and $\la x*_py\ra= \la x*_qy\ra$.

We have two possibilities: 
\begin{itemize}
\item[(1)] $\e(p)=+1$. In this case $\theta_p+\phi_p=\pi$.
\item[(2)] $\e(p)=-1$. In this case $\theta_p=\phi_p$.
\end{itemize}

As proofs for both the cases are identical, without loss of generality we assume that $\e(p)=+1$ and $\e(q)=-1$. Therefore  $\theta_p+\phi_p=\pi$ and $\theta_q=\phi_q$.

As $x$ is simple, let $X_s$ be the point in $\T(F)$ obtained by a left twist deformation after time $s$ from $X$ along $x$. Since $\e(p)=-\e(q)$, 
$$\phi_p(X_s)=\pi-\theta_p(X_s)\,\,\,\,\text{and}\,\,\,\,\phi_q(X_s)=\theta_q(X_s)\,\,\,\, \text{ (see Figure \ref{opoaxis}}).$$ By Lemma \ref{ker}, both the  functions $\phi_p(X_s)$ and $\phi_q(X_s)$ are strictly decreasing. Therefore  $\theta_p(X_s)$ is an strictly increasing function and $\theta_q(X_s)$ is a strictly decreasing function. On the other hand by Theorem \ref{stng}, $\theta_p(X_s)=\theta_q(X_s)$. Thus we arrived at a contradiction as desired.\end{proof}

\appendix\label{appen}
\section{Proof of Lemma \ref{ker}}\label{appen}

\begin{style}{Lemma \ref{ker}}
	Let $x$ be a simple closed geodesic and $y$ be any closed geodesic in $X$. For every intersection point $p$ between $x$ and $y$, the function $\phi_p(X_s)$ is a strictly decreasing function of $s$.
	
\end{style}
\begin{proof}
	We use the unit disc model of the hyperbolic plane. Without loss of generality, assume that the axis of the simple geodesic $x$, $A_x$ is the horizontal diameter. 
	
   Since $x$ is simple, all lifts of $x$ are disjoint. Fix a lift $P$ of $p$ in $A_x$ and choose a lift $\g$ of $y$ passing through $P$. Let $\g_s$ be the image of $\g$ after left twist deformation along $x$ at time $s$ and $\overline{\g_s}$ is the geodesic corresponding to $\g_s$. When we travel from $P$ along $\g_s$, every time we cross a lift of $x $, we have to slide a distance $s$ to the left along that lift. Therefore viewed from $P$, $\g_s$ is an alternative concatenation of geodesic arcs $A_i$ and $B_i$ as shown in Figure \ref{Twist}. 
	
   To prove the lemma it is enough to show that the endpoints of $\overline{\g_s}$ are strictly to the left (when viewed from a $P$) of the endpoints of $\g$. 
   
   Consider the geodesic rays (the dotted lines in the figure) obtained by extending $A_i$ in the forward direction viewed from $P$. We also denote this rays by $A_i$. We claim that for $i$ positive (respectively negative), the endpoint of the ray $A_{i+1}$ (respectively $A_{i-1}$) lies on the left (when viewed from $P$) of the endpoint of the ray $A_i$. We show it for $i$ positive. For $i$ negative the proof is similar. 
   
   	\begin{figure}[h]
   	\centering
   	\includegraphics[trim = 20mm 48mm 30mm 70mm, clip, width=7cm]{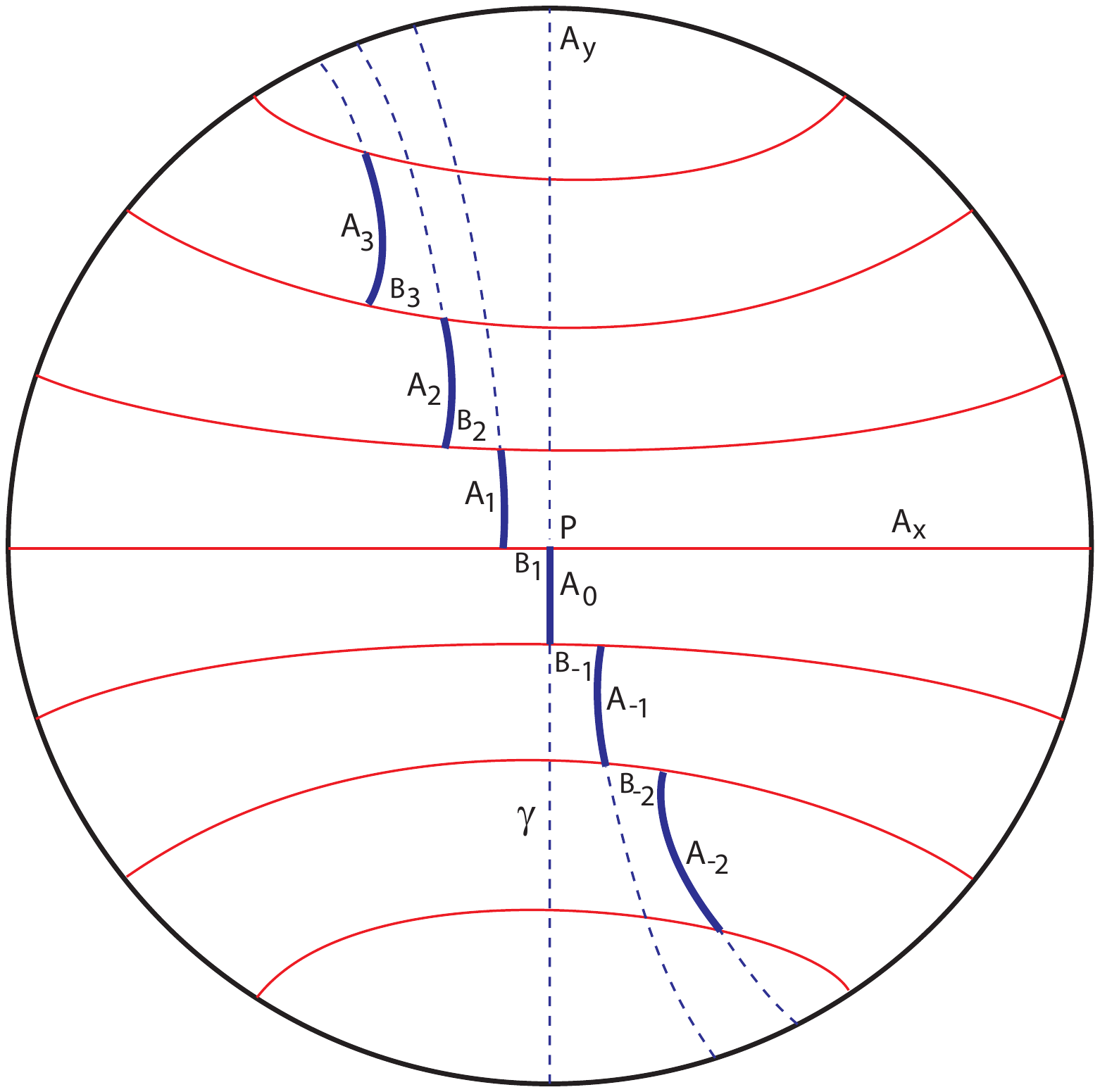}
   	\caption{}\label{Twist}
   \end{figure}
   
   Suppose the endpoint of $A_{i+1}$ is on the right of the endpoint of $A_i$. Then the geodesic rays $A_i$ and $A_{i+1}$ must intersect.  The angle between $B_{i+1}$  with $A_{i-1}$ and $A_i$ are the same (viewed anti-clockwise from $B_i$). Therefore sum of the angles of the triangle formed by the endpoints of $B_{i+1}$ and the intersection point between  $A_i$ and $A_{i+1}$ is at least $\pi$, which is impossible by Gauss-Bonnet theorem. This proves the claim.
   
   By the uniqueness of geodesics in $\H$, when  $i$ goes to $+\infty$ (respectively $-\infty$)  the endpoints of the rays $A_i$ converges to the endpoints of $\overline{\g_s}$. As for $i$ positive (respectively negative), endpoint of each $A_{i+1}$ (respectively $A_{i-1}$) is strictly to the left of the endpoint of $A_{i}$, when viewed from $P$, the endpoints of $\overline{\g_s}$ lies strictly on the left of the endpoints of the geodesic $\g$.\end{proof}

\section{Transverse intersection without double points}\label{trans}

In the definition of the Goldman bracket, we required the two curves to intersect transversally in double points. In this section we show that the condition of intersecting in double points is not necessary.

Let $\A:S^1\rightarrow F$ and $\B:S^1\rightarrow F$ be two smooth curves on $F$, intersecting transversally (not-necessarily in double points). Define the set of all intersection points 
$$I(\A,\B)=\{(t_1,t_2)\in S^1\times S^1: \A(t_1)=\B(t_2)\}.$$
\begin{definition}\label{gin}
	Let $\la x\ra$ and $\la y \ra$ be two free homotopy classes of closed curves. The \emph{geometric intersection number} $i(x,y)$ between $\la x\ra$ and $\la y\ra$ is defined as $$i(x,y)=\underset{x\in \la x\ra,y\in\la y\ra}{\mathrm{inf}}\#I(x,y).$$
\end{definition}	

 Let $x$ and $y$ be two smooth closed oriented curves in $F$. Given any intersection point $(t_1,t_2)\in I(\A,\B)$, let $p=\A(t_1)=\B(t_2)$. Let $\A_*:\pi_1(S^1,t_1)\rightarrow \pi_1(F,p)$ and $\B_*:(S^1,t_2)\rightarrow \pi_1(F,p)$ be the maps induced by $\A$ and $\B$ in the fundamental group of $S^1$ based at $t_1$ and $t_2$ respectively. Let $z_1$ (respectively $z_2$) be the generator of $\pi_1(S^1,t_1)$ (respectively $\pi_1(S^1,t_2)$) with the given orientation of $S^1$. Define the loop product of $\A$ and $\B$ at $(t_1,t_2)$ by $\A_*(z_1)*_{p}\B_*(z_2)$ where $*_{p}$ denotes the product of the fundamental group of $F$ based at $p$. Define the Goldman bracket between $\A$ and $\B$ to be $$[\A,\B]=\sum_{(t_,t_2)\in I(\A,\B)}\e(p)\la\A_*(z_1)*_{p}\B_*(z_2)\big\ra$$ where $\e(p)=\pm1$ depending on whether the orientation of $(\A'(t_1),\B'(t_2))$ agrees with the orientation of $F$ or not.

 It is straightforward to check that this definition agrees with the original definition of the Goldman bracket. The benefit of the above definition is that we can consider only the geodesic representatives in the corresponding free homotopy classes as two geodesics  always intersect each other transversally.

\end{document}